\newcommand{\bdry}[1]{\partial #1}
\newcommand{\dint}{\ds{\int}}
\newcommand{\dist}[2]{\text{dist}\, (#1,#2)}
\newcommand{\ds}[1]{\displaystyle #1}
\newcommand{\id}[1]{id_{#1}}
\newcommand{\incl}{\subset}
\newcommand{\norm}[2][]{\left\|#2\right\|_{#1}}
\renewcommand{\O}{\text{O}}
\renewcommand{\o}{\text{o}}
\newcommand{\PS}[1]{$(\text{PS})_{#1}$}
\newcommand{\QED}{\mbox{\qedhere}}
\newcommand{\restr}[2]{\left.#1\right|_{#2}}
\newcommand{\seq}[1]{\left(#1\right)}
\newcommand{\set}[1]{\left\{#1\right\}}
\newcommand{\N}{\mathbb N}
\newcommand{\R}{\mathbb R}
\newcommand{\RP}{\R \text{P}}
\newcommand{\Z}{\mathbb Z}
\newcommand{\A}{{\cal A}}
\newcommand{\F}{{\cal F}}
\def\cprime{$'$}
\DeclareMathOperator{\divg}{div}
\newenvironment{properties}[1]{\begin{enumerate}

}{\end{enumerate}}
\newtheorem{lemma}{Lemma}[section]
\newtheorem{proposition}[lemma]{Proposition}
\newtheorem{theorem}[lemma]{Theorem}
\theoremstyle{remark}
\newtheorem{example}[lemma]{Example}
\newtheorem{remark}[lemma]{Remark}
\numberwithin{equation}{section}
\title{\bf $(N,q)$-Laplacian problems with critical Trudinger-Moser nonlinearities\thanks{{\em MSC2010:} Primary 35J92, 35B33, Secondary 58E05
\newline \indent\; {\em Key Words and Phrases:} $(N,q)$-Laplacian problems, critical nonlinearity, nontrivial solutions, critical point theory, cohomological index}}
\author{\bf Yang Yang\thanks{This work was completed while the first-named author was visiting the Department of Mathematical Sciences at the Florida Institute of Technology, and she is grateful for the kind hospitality of the department. Project supported by NSFC-Tian Yuan Special Foundation (No. 11226116), Natural Science Foundation of Jiangsu Province of China for Young Scholars (No. BK2012109), China Scholarship Council (No. 201208320435), and NSFC (Nos. 11501252, 11571176).}\\
School of Science\\
Jiangnan University\\
Wuxi, 214122, China\\
[\bigskipamount]
\bf Kanishka Perera\\
Department of Mathematical Sciences\\
Florida Institute of Technology\\
Melbourne, FL 32901, USA}
\date{}
\begin{document}

\maketitle

\begin{abstract}
We obtain nontrivial solutions of a $(N,q)$-Laplacian problem with a critical \linebreak Trudinger-Moser nonlinearity in a bounded domain. In addition to the usual difficulty of the loss of compactness associated with problems involving critical nonlinearities, this problem lacks a direct sum decomposition suitable for applying the classical linking theorem. We show that every Palais-Smale sequence at a level below a certain energy threshold admits a subsequence that converges weakly to a nontrivial critical point of the variational functional. Then we prove an abstract critical point theorem based on a cohomological index and use it to construct a minimax level below this threshold.
\end{abstract}

\section{Introduction and main results}

The $(p,q)$-Laplacian operator
\[
\Delta_p\, u + \Delta_q\, u = \divg \left[\left(|\nabla u|^{p-2} + |\nabla u|^{q-2}\right) \nabla u\right]
\]
appears in a wide range of applications that include biophysics \cite{MR527914}, plasma physics \cite{MR901723}, reaction-diffusion equations \cite{MR518461,MR2126276}, and models of elementary particles \cite{MR0174304,MR1626832,MR1785469}. Consequently, quasilinear elliptic boundary value problems involving this operator have been widely studied in the literature (see, e.g., \cite{MR1929880,MR2536276,MR2834776,MR2988766} and the references therein). In particular, the critical $(p,q)$-Laplacian problem
\[
\left\{\begin{aligned}
- \Delta_p\, u - \Delta_q\, u & = \mu\, |u|^{r-2}\, u + |u|^{p^\ast - 2}\, u && \text{in } \Omega\\[10pt]
u & = 0 && \text{on } \bdry{\Omega},
\end{aligned}\right.
\]
where $\Omega$ is a bounded domain in $\R^N,\, N > p > q > 1$, $\mu > 0$, and $p^\ast = Np/(N - p)$ is the critical Sobolev exponent, has been studied by Li and Zhang \cite{MR2509998} in the case $1 < r < q$ and by Yin and Yang \cite{MR2890966} in the case $p < r < p^\ast$. The borderline case
\begin{equation} \label{11}
\left\{\begin{aligned}
- \Delta_p\, u - \Delta_q\, u & = \mu\, |u|^{q-2}\, u + \lambda\, |u|^{p-2}\, u + |u|^{p^\ast - 2}\, u && \text{in } \Omega\\[10pt]
u & = 0 && \text{on } \bdry{\Omega}
\end{aligned}\right.
\end{equation}
with $\mu \in \R$ and $\lambda > 0$ was recently studied in Candito et al.\! \cite{CaMaPe}.

When $N = p \ge 2$, the critical growth is of exponential type and is governed by the Trudinger-Moser inequality
\begin{equation} \label{1.3}
\sup_{u \in W^{1,N}_0(\Omega),\; \norm[N]{\nabla u} \le 1}\, \int_\Omega e^{\, \alpha_N\, |u|^{N'}} dx < \infty,
\end{equation}
where $W^{1,N}_0(\Omega)$ is the usual Sobolev space with the norm $\norm[N]{\nabla u} = \left(\int_\Omega |\nabla u|^N\, dx\right)^{1/N}$, $\alpha_N = N \omega_{N-1}^{1/(N-1)}$, $\omega_{N-1}$ is the area of the unit sphere in $\R^N$, and $N' = N/(N - 1)$ (see Trudinger \cite{MR0216286} and Moser \cite{MR0301504}). A natural analogue of problem \eqref{11} for this case is
\begin{equation} \label{1}
\left\{\begin{aligned}
- \Delta_N\, u - \Delta_q\, u & = \mu\, |u|^{q-2}\, u + \lambda\, |u|^{N-2}\, u\, e^{\, |u|^{N'}} && \text{in } \Omega\\[10pt]
u & = 0 && \text{on } \bdry{\Omega},
\end{aligned}\right.
\end{equation}
where $\mu \in \R$ and $\lambda > 0$, which is the object of study of the present paper. In addition to the usual difficulty of the lack of compactness associated with problems involving critical nonlinearities, this problem is further complicated by the absence of a direct sum decomposition suitable for applying the linking theorem when $\mu$ is above the second eigenvalue of the eigenvalue problem
\begin{equation} \label{7}
\left\{\begin{aligned}
- \Delta_q\, u & = \mu\, |u|^{q-2}\, u && \text{in } \Omega\\[10pt]
u & = 0 && \text{on } \bdry{\Omega}.
\end{aligned}\right.
\end{equation}
To overcome this difficulty, we will first prove an abstract critical point theorem based on a cohomological index that generalizes the classical linking theorem of Rabinowitz \cite{MR0488128}.

Weak solutions of problem \eqref{1} coincide with critical points of the $C^1$-functional
\begin{equation} \label{2}
\Phi(u) = \int_\Omega \left[\frac{1}{N}\, |\nabla u|^N + \frac{1}{q}\, |\nabla u|^q - \frac{\mu}{q}\, |u|^q - \lambda\, F(u)\right] dx, \quad u \in W^{1,N}_0(\Omega),
\end{equation}
where
\begin{equation} \label{2.1}
F(t) = \int_0^t |s|^{N-2}\, s\, e^{\, |s|^{N'}} ds = \int_0^{|t|} s^{N-1}\, e^{\, s^{N'}} ds.
\end{equation}
Recall that $\Phi$ satisfies the Palais-Smale compactness condition at the level $c \in \R$, or \PS{c} for short, if every sequence $\seq{u_j} \subset W^{1,N}_0(\Omega)$ such that $\Phi(u_j) \to c$ and $\Phi'(u_j) \to 0$, called a \PS{c} sequence, has a convergent subsequence. Our existence results will be based on the following proposition.

\begin{proposition} \label{Proposition 1}
If $c < \alpha_N^{N-1}/N$ and $c \ne 0$, then every {\em \PS{c}} sequence has a subsequence that converges weakly to a nontrivial critical point of $\Phi$.
\end{proposition}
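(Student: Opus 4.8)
The plan is to establish the statement in three steps: (1) every \PS{c} sequence $\seq{u_j}$ is bounded in $W^{1,N}_0(\Omega)$; (2) a weak limit $u$ of a suitable subsequence is a critical point of $\Phi$; and (3) $u \ne 0$, this last step being the only place where the hypotheses $c \ne 0$ and $c < \alpha_N^{N-1}/N$ are used. Throughout I use the compact embeddings $W^{1,N}_0(\Omega) \hookrightarrow L^s(\Omega)$ for every $s < \infty$, the bounds $\norm[q]{\nabla v} \le C\, \norm[N]{\nabla v}$ and $\norm[q]{v} \le C\, \norm[N]{\nabla v}$, the consequence of \eqref{1.3} that $e^{\beta\, |v|^{N'}} \in L^r(\Omega)$ for every $v \in W^{1,N}_0(\Omega)$ and all $\beta, r > 0$, and the elementary inequalities $N\, F(t) \le t\, F'(t) = |t|^N\, e^{|t|^{N'}}$ and $F(t)/\bigl(t\, F'(t)\bigr) \to 0$ as $|t| \to \infty$. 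The main difficulty is that, since $\Phi$ need not satisfy the full Palais--Smale condition at level $c$, one cannot expect $u_j \to u$ strongly; the critical exponential term must therefore be handled through $L^1$-type convergence and a localized argument, and the energy threshold has to be exploited to rule out vanishing in step (3).

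For step (1), observe first that $t\, F'(t) - N\, F(t) \ge 0$ everywhere, $\ge \tfrac12\, t\, F'(t)$ for $|t|$ large, and $t\, F'(t)$ is bounded on bounded intervals, so $\int_\Omega u_j\, F'(u_j)\, dx \le C + 2\int_\Omega \bigl(u_j\, F'(u_j) - N\, F(u_j)\bigr)\, dx$. Expanding $N\Phi(u_j) - \langle \Phi'(u_j), u_j\rangle$, the $W^{1,N}$-terms cancel, leaving
\[
\lambda \int_\Omega \bigl(u_j\, F'(u_j) - N\, F(u_j)\bigr)\, dx = N\Phi(u_j) - \langle \Phi'(u_j), u_j\rangle - \Bigl(\tfrac{N}{q} - 1\Bigr)\Bigl(\norm[q]{\nabla u_j}^q - \mu\, \norm[q]{u_j}^q\Bigr);
\]
since $\bigl|\norm[q]{\nabla u_j}^q - \mu\, \norm[q]{u_j}^q\bigr| \le C\, \norm[N]{\nabla u_j}^q$ and $\langle \Phi'(u_j), u_j\rangle = \o\bigl(\norm[N]{\nabla u_j}\bigr)$, this yields $\int_\Omega u_j\, F'(u_j)\, dx \le C\bigl(1 + \norm[N]{\nabla u_j}^q\bigr) + \o\bigl(\norm[N]{\nabla u_j}\bigr)$. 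Inserting this into the rearranged identity $\norm[N]{\nabla u_j}^N = \lambda \int_\Omega u_j\, F'(u_j)\, dx - \norm[q]{\nabla u_j}^q + \mu\, \norm[q]{u_j}^q + \o\bigl(\norm[N]{\nabla u_j}\bigr)$ gives $\norm[N]{\nabla u_j}^N \le C\bigl(1 + \norm[N]{\nabla u_j}^q\bigr) + \o\bigl(\norm[N]{\nabla u_j}\bigr)$, which forces $\seq{u_j}$ to be bounded because $1 < q < N$ and $N \ge 2$. In particular $\int_\Omega u_j\, F'(u_j)\, dx \le C$.

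In step (2), pass to a subsequence with $u_j \rightharpoonup u$ in $W^{1,N}_0(\Omega)$, $u_j \to u$ in every $L^s(\Omega)$ and a.e.\ in $\Omega$. The bound $\int_\Omega u_j\, F'(u_j)\, dx \le C$ makes $\set{F'(u_j)}$ uniformly integrable, so by Vitali's theorem $F'(u_j) \to F'(u)$ in $L^1(\Omega)$ and $F'(u) \in L^1(\Omega)$. To obtain $\nabla u_j \to \nabla u$ a.e.\ I would run a localized Boccardo--Murat argument: for $\psi \in C_c^\infty(\Omega)$ with $0 \le \psi \le 1$ and $k > 0$, the function $T_k(u_j - u)\, \psi$, $T_k$ the truncation at level $k$, is bounded in $W^{1,N}_0(\Omega)$, so $\langle \Phi'(u_j), T_k(u_j - u)\, \psi\rangle \to 0$; since $|T_k(u_j - u)| \le k$ and $T_k(u_j - u) \to 0$ a.e., the term $\mu \int |u_j|^{q-2} u_j\, T_k(u_j - u)\, \psi\, dx$, the critical term $\lambda \int F'(u_j)\, T_k(u_j - u)\, \psi\, dx$ (via $F'(u_j) \to F'(u)$ in $L^1$ and the generalized dominated convergence theorem), and the pieces of the quasilinear term carrying $\nabla\psi$ all tend to $0$, leaving
\[
\int_{\set{|u_j - u| < k}} \Bigl[\bigl(|\nabla u_j|^{N-2}\nabla u_j - |\nabla u|^{N-2}\nabla u\bigr) + \bigl(|\nabla u_j|^{q-2}\nabla u_j - |\nabla u|^{q-2}\nabla u\bigr)\Bigr] \cdot (\nabla u_j - \nabla u)\, \psi\, dx \to 0,
\]
where a term involving only the fixed vector field $|\nabla u|^{N-2}\nabla u + |\nabla u|^{q-2}\nabla u$, which vanishes by weak convergence, has been subtracted. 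The integrand is nonnegative and, as $N \ge 2$, dominates $c_N\, |\nabla u_j - \nabla u|^N\, \psi\, \chi_{\set{|u_j - u| < k}}$; hence $\nabla u_j \to \nabla u$ a.e.\ on $\set{\psi = 1}$, and exhausting $\Omega$ and diagonalizing gives $\nabla u_j \to \nabla u$ a.e.\ in $\Omega$. Consequently $|\nabla u_j|^{N-2}\nabla u_j \rightharpoonup |\nabla u|^{N-2}\nabla u$ in $L^{N'}(\Omega)$ and $|\nabla u_j|^{q-2}\nabla u_j \rightharpoonup |\nabla u|^{q-2}\nabla u$ in $L^{q'}(\Omega)$, and passing to the limit in $\langle \Phi'(u_j), \varphi\rangle \to 0$ for $\varphi \in C_c^\infty(\Omega)$, using $F'(u_j) \to F'(u)$ in $L^1(\Omega)$ for the critical term, shows $\langle \Phi'(u), \varphi\rangle = 0$ for all $\varphi \in C_c^\infty(\Omega)$, hence $\Phi'(u) = 0$ by density.

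For step (3), suppose $u = 0$ and pass to a further subsequence with $\norm[N]{\nabla u_j}^N \to a$. If $a = 0$ then $u_j \to 0$ in $W^{1,N}_0(\Omega)$, so $\Phi(u_j) \to \Phi(0) = 0$, contradicting $c \ne 0$; so assume $a > 0$. Since $u_j \to 0$ in $L^q(\Omega)$ we have $\mu\, \norm[q]{u_j}^q \to 0$. I claim $a \ge \alpha_N^{N-1}$: if not, choose $\tau \in (a, \alpha_N^{N-1})$ and $p > 1$ with $p\, \tau^{1/(N-1)} \le \alpha_N$; for $j$ large $\norm[N]{\nabla u_j}^N \le \tau$, so setting $v_j = u_j/\norm[N]{\nabla u_j}$ (so $\norm[N]{\nabla v_j} = 1$) gives $\int_\Omega e^{p\, |u_j|^{N'}} dx \le \int_\Omega e^{\alpha_N\, |v_j|^{N'}} dx \le C$ by \eqref{1.3}, whence by Hölder $\int_\Omega u_j\, F'(u_j)\, dx = \int_\Omega |u_j|^N\, e^{|u_j|^{N'}} dx \le \norm[Np']{u_j}^N\, \norm[p]{e^{|u_j|^{N'}}} \to 0$; but $\langle \Phi'(u_j), u_j\rangle \to 0$ forces $\norm[N]{\nabla u_j}^N + \norm[q]{\nabla u_j}^q = \mu\, \norm[q]{u_j}^q + \lambda \int_\Omega u_j\, F'(u_j)\, dx + \o(1) \to 0$, contradicting $a > 0$. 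Next, by $F(t) = \o\bigl(t\, F'(t)\bigr)$, for any $\varepsilon > 0$ there is $M$ with $F(t) \le \varepsilon\, t\, F'(t)$ for $|t| \ge M$, so $\int_\Omega F(u_j)\, dx \le \int_{\set{|u_j| < M}} F(u_j)\, dx + \varepsilon \int_\Omega u_j\, F'(u_j)\, dx$; the first integral tends to $0$ by dominated convergence and the second is bounded, so $\int_\Omega F(u_j)\, dx \to 0$. Therefore $c = \lim \Phi(u_j) = \lim\Bigl[\tfrac1N \norm[N]{\nabla u_j}^N + \tfrac1q \norm[q]{\nabla u_j}^q\Bigr] \ge \tfrac1N\, a \ge \tfrac1N\, \alpha_N^{N-1}$, contradicting $c < \alpha_N^{N-1}/N$. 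Hence $u \ne 0$, so $u$ is the desired nontrivial critical point of $\Phi$. The step I expect to require the most care is step (3) — extracting from the single inequality $c < \alpha_N^{N-1}/N$ enough control on the Trudinger--Moser term to exclude vanishing — with the treatment of the critical term in step (2) in the absence of strong convergence a close second.
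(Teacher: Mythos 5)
Your proof is correct and follows essentially the same route as the paper: boundedness from a linear combination of $\Phi(u_j)$ and $\Phi'(u_j)\,u_j$ together with the superlinear growth of $F$, $L^1$-convergence of the exponential term (your Vitali argument simply reproves the de Figueiredo--Miyagaki--Ruf lemma the paper cites), and, assuming the weak limit vanishes, the Trudinger--Moser inequality below the threshold $\alpha_N^{N-1}/N$ combined with $\Phi'(u_j)\,u_j \to 0$ to force $u_j \to 0$ strongly, contradicting $c \ne 0$. The only substantive addition is your localized truncation (Boccardo--Murat) argument yielding a.e.\ convergence of the gradients, a step the paper passes over silently when letting $j \to \infty$ in \eqref{8}; this fills in a detail of the same method rather than giving a different one.
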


Let
\begin{equation} \label{6}
\mu_1 = \inf_{u \in W^{1,q}_0(\Omega) \setminus \set{0}}\, \frac{\norm[q]{\nabla u}^q}{\norm[q]{u}^q} > 0
\end{equation}
be the first eigenvalue of the eigenvalue problem \eqref{7}. First we seek a nonnegative nontrivial solution of problem \eqref{1} when $\mu \le \mu_1$. We assume that $q > N/2$, so that $N < q^\ast = Nq/(N - q)$. Our first main result is the following theorem.

\begin{theorem} \label{Theorem 5}
Assume that $N/2 < q < N$. If $\mu < \mu_1$, then there exists $\lambda^\ast(\mu) > 0$ such that problem \eqref{1} has a nonnegative nontrivial solution for all $\lambda \ge \lambda^\ast(\mu)$.
\end{theorem}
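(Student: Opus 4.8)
The plan is to verify the mountain-pass geometry for $\Phi$ when $\mu<\mu_1$ and then use Proposition~\ref{Proposition 1} to extract a nontrivial critical point, after checking that the mountain-pass level lies below the threshold $\alpha_N^{N-1}/N$. Since we want a nonnegative solution, I would first replace the nonlinearity by its positive truncation: set $F^+(t)=F(t)$ for $t\ge 0$ and $F^+(t)=0$ for $t<0$, and correspondingly work with $\Phi^+(u)=\int_\Omega[\frac{1}{N}|\nabla u|^N+\frac{1}{q}|\nabla u|^q-\frac{\mu}{q}(u^+)^q-\lambda F^+(u)]\,dx$. A standard argument (testing with $u^-$) shows any critical point of $\Phi^+$ is nonnegative and solves \eqref{1}; one also checks Proposition~\ref{Proposition 1} applies verbatim to $\Phi^+$ since the truncation does not affect the concentration-compactness analysis. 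For the local geometry near $0$: using $\mu<\mu_1\le\mu_1(\text{the }q\text{-eigenvalue})$ we have $\frac{1}{q}\norm[q]{\nabla u}^q-\frac{\mu}{q}\norm[q]{u}^q\ge c_0\norm[q]{\nabla u}^q\ge 0$ (and is coercive in the $W^{1,q}$-seminorm when $\mu<\mu_1$), while $F^+(t)=\O(t^N)+\o(t^N)$ near $0$ with a superexponential tail controlled, for $\norm[N]{\nabla u}$ small, by the Trudinger-Moser inequality \eqref{1.3}; hence $\Phi^+(u)\ge \frac{1}{2N}\norm[N]{\nabla u}^N>0$ on a small sphere $\norm[N]{\nabla u}=\rho$, giving a positive mountain-pass floor $b(\lambda)>0$.

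Next I would produce the "over the mountain" point and, simultaneously, the crucial energy estimate. Fix any $w\in W^{1,N}_0(\Omega)$, $w\ge 0$, $w\not\equiv 0$; since $F^+$ grows faster than any power, $\Phi^+(tw)\to-\infty$ as $t\to\infty$, so for each $\lambda>0$ we get $e=t_\lambda w$ with $\Phi^+(e)<0$ and can define the mountain-pass value
\[
c(\lambda)=\inf_{\gamma\in\Gamma}\ \max_{t\in[0,1]}\Phi^+(\gamma(t)),\qquad \Gamma=\set{\gamma\in C([0,1],W^{1,N}_0(\Omega)):\gamma(0)=0,\ \gamma(1)=e}.
\]
The heart of the proof is to show $\limsup_{\lambda\to\infty}c(\lambda)=0$, or at least that $c(\lambda)<\alpha_N^{N-1}/N$ for all large $\lambda$. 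Estimating along the ray $t\mapsto tw$: on that segment $\Phi^+(tw)\le \frac{t^N}{N}\norm[N]{\nabla w}^N+\frac{t^q}{q}\norm[q]{\nabla w}^q-\lambda F^+(tw)$ (dropping the good term $-\frac{\mu}{q}\int(w^+)^q$ if $\mu\ge 0$, or absorbing it if $\mu<0$). Because $F^+(tw)$ is bounded below by a positive constant times $t^N$ on the region where $w$ is bounded away from $0$, the maximum over $t\ge 0$ of the right-hand side behaves like $\O(\lambda^{-\delta})$ for some $\delta>0$; more carefully, one shows $\max_{t\ge0}\Phi^+(tw)\to 0$ as $\lambda\to\infty$. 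Choosing $\lambda^\ast(\mu)$ so large that this max is $<\alpha_N^{N-1}/N$ for $\lambda\ge\lambda^\ast(\mu)$ gives $0<b(\lambda)\le c(\lambda)<\alpha_N^{N-1}/N$.

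Finally, invoke the mountain-pass theorem (the deformation argument, without a compactness hypothesis) to obtain a \PS{c(\lambda)} sequence for $\Phi^+$; since $c(\lambda)\ne 0$ (indeed $c(\lambda)\ge b(\lambda)>0$) and $c(\lambda)<\alpha_N^{N-1}/N$, Proposition~\ref{Proposition 1} (applied to $\Phi^+$) yields a subsequence converging weakly to a nontrivial critical point $u$ of $\Phi^+$. Testing $\Phi^{+\prime}(u)$ with $u^-$ shows $u\ge0$, so $u$ is a nonnegative nontrivial weak solution of \eqref{1} for every $\lambda\ge\lambda^\ast(\mu)$. The main obstacle I anticipate is the energy estimate $c(\lambda)<\alpha_N^{N-1}/N$: one must be genuinely quantitative about how fast $\max_{t}\Phi^+(tw)$ decays in $\lambda$, and here the hypothesis $N/2<q<N$ enters — it guarantees $W^{1,N}_0(\Omega)\hookrightarrow W^{1,q}_0(\Omega)\hookrightarrow L^q(\Omega)$ with the $q$-terms genuinely lower order, so they do not obstruct pushing the level down; getting the constant comparison right (rather than merely $c(\lambda)\to0$ qualitatively) is the delicate point, though for the existence statement as phrased the qualitative decay suffices.
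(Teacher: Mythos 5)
Your overall architecture coincides with the paper's: truncate to $\Phi^+$, verify mountain-pass geometry, push the minimax level below $\alpha_N^{N-1}/N$ by maximizing $\Phi^+$ along a fixed ray $t\mapsto tw$ as $\lambda\to\infty$, and conclude via a \PS{c} sequence and (an adaptation of) Proposition \ref{Proposition 1}. Your level estimate is essentially the paper's Lemma \ref{Lemma 3} in qualitative form (you use $F(t)\ge |t|^N/N$ where the paper uses $F(t)\ge |t|^{N+N'}/(N+N')$ to get an explicit bound; either yields $\max_{t\ge 0}\Phi^+(tw)\to 0$ as $\lambda\to\infty$, which suffices).

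The genuine gap is in your verification of the mountain-pass floor $b(\lambda)>0$, i.e.\ that $0$ is a strict local minimizer of $\Phi^+$. Your sketch claims $\Phi^+(u)\ge \tfrac{1}{2N}\norm[N]{\nabla u}^N$ on a small sphere by controlling the "tail" with Trudinger-Moser, which implicitly absorbs $\lambda\int_\Omega |u|^N e^{|u|^{N'}}dx$ into the $N$-gradient term. This fails for large $\lambda$: the Trudinger-Moser/H\"older bound gives $\int_\Omega |u|^N e^{|u|^{N'}}dx \le C\norm[N]{\nabla u}^N$ at best, which is of the \emph{same} order $N$ with a constant multiplied by $\lambda$, so $\tfrac1N\norm[N]{\nabla u}^N$ cannot beat it, and on the sphere $\norm[N]{\nabla u}=\rho$ the ratio $\norm[q]{\nabla u}/\norm[N]{\nabla u}$ has no positive lower bound, so the coercive $q$-term cannot rescue a negative coefficient of $\rho^N$ either. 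The correct mechanism (the paper's) is to estimate the critical term through the $q$-norm: by H\"older with exponents $1/\alpha_N\beta^{N'}+N/q^\ast=1$ and \eqref{1.3}, $\int_\Omega |u|^N e^{|u|^{N'}}dx \le C\norm[q^\ast]{u}^N \le C'\norm[q]{\nabla u}^N$ for $\norm[N]{\nabla u}$ small, and this, being of order $N>q$, is absorbed by $\tfrac1q(1-\mu^+/\mu_1)\norm[q]{\nabla u}^q$ on a sphere of radius $\rho(\lambda)$ small; then $\Phi^+\ge\tfrac1N\rho^N$ there. Note that this step is exactly where the hypothesis $q>N/2$ (equivalently $q^\ast>N$, so that $N/q^\ast<1$ leaves room for the exponential factor) is used; your attribution of $N/2<q$ to the embedding into $L^q$ and to "not obstructing pushing the level down" is incorrect — $q<N$ alone gives those, and the ray estimate never uses $q>N/2$. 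With the floor argument repaired in this way, the rest of your proposal goes through as in the paper.
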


Let $u^\pm(x) = \max \set{\pm u(x),0}$ be the positive and negative parts of $u$, respectively, and set
\[
\Phi^+(u) = \int_\Omega \left[\frac{1}{N}\, |\nabla u|^N + \frac{1}{q}\, |\nabla u|^q - \frac{\mu}{q}\, (u^+)^q - \lambda\, F(u^+)\right] dx, \quad u \in W^{1,N}_0(\Omega).
\]
If $u$ is a critical point of $\Phi^+$, then
\[
{\Phi^+}'(u)\, u^- = \int_\Omega \left(|\nabla u^-|^N + |\nabla u^-|^q\right) dx = 0
\]
and hence $u^- = 0$, so $u = u^+$ is a critical point of $\Phi$ and therefore a nonnegative solution of problem \eqref{1}. Proof of Theorem \ref{Theorem 5} will be based on constructing a minimax level of mountain pass type for $\Phi^+$ below the threshold level given in Proposition \ref{Proposition 1}.

Next we seek a (possibly nodal) nontrivial solution of problem \eqref{1} when $\mu \ge \mu_1$. We have the following theorem.

\begin{theorem} \label{Theorem 6}
Assume that $N/2 < q < N$. If $\mu \ge \mu_1$, then there exists $\lambda^\ast(\mu) > 0$ such that problem \eqref{1} has a nontrivial solution for all $\lambda \ge \lambda^\ast(\mu)$.
\end{theorem}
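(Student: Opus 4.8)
The plan is to apply the abstract critical point theorem (the one based on the cohomological index, generalizing Rabinowitz's linking theorem) to the functional $\Phi$ from \eqref{2}, exactly as in the proof of Theorem \ref{Theorem 5} but now with a genuine linking geometry rather than a mountain-pass geometry, since $\mu \ge \mu_1$ forces the origin to no longer be a local minimum of the quadratic-type part in the $q$-norm direction. First I would recall the variational setting on $W^{1,N}_0(\Omega)$ and, using the first eigenfunction $u_1$ of \eqref{7} associated with $\mu_1$ (and, for $\mu$ large, a higher-dimensional subspace spanned by eigenfunctions on which $\int_\Omega(|\nabla u|^q - \mu\,|u|^q)\,dx \le 0$), set up a symmetric linking pair: a sphere of small radius in a complementary set and a bounded piece of the ``negative'' subspace capped by a large hemisphere, with the cohomological index of the sphere strictly exceeding that of the boundary of the cap. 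The role of the index is precisely to bypass the missing direct sum decomposition: $W^{1,N}_0(\Omega) \not\subset W^{1,q}_0(\Omega)$ need not split, but the index inequality still yields intersection.

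The key steps, in order, would be: (i) verify the local geometry near $0$ — on the ``positive'' side (after removing a finite-dimensional subspace), $\Phi(u) \ge \beta > 0$ on a small sphere, using that the $\frac1N\|\nabla u\|_N^N$ term dominates near $0$ and that the Trudinger-Moser term $F(u)$ is of higher order, exactly as in the estimate behind Proposition \ref{Proposition 1}; (ii) verify that $\Phi \to -\infty$ on the finite-dimensional ``negative'' subspace and on the large cap, using $F(t) \ge C\,|t|^{N} e^{|t|^{N'}}$ for $|t|$ bounded away from $0$ together with $\lambda$ large and the equivalence of norms on finite-dimensional spaces — so $\sup \Phi$ over the cap is finite; (iii) the crucial quantitative step: show that this $\sup \Phi$ over the cap can be made strictly less than the threshold $\alpha_N^{N-1}/N$ of Proposition \ref{Proposition 1} by choosing $\lambda \ge \lambda^\ast(\mu)$ large, via a careful Moser-type truncation/concentration estimate of the energy on the finite-dimensional linking set; (iv) conclude that the minimax value $c$ produced by the abstract theorem satisfies $0 < \beta \le c < \alpha_N^{N-1}/N$; (v) invoke Proposition \ref{Proposition 1}: since $c \ne 0$ and $c$ lies below the threshold, every \PS{c} sequence has a subsequence converging weakly to a nontrivial critical point of $\Phi$, hence a nontrivial (possibly sign-changing) solution of \eqref{1}.

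I expect the main obstacle to be step (iii): pinning down $\lambda^\ast(\mu)$ so that the whole linking cap — not just a single path — has energy below $\alpha_N^{N-1}/N$. The competing effects are that enlarging the cap to force $\Phi \to -\infty$ at its boundary tends to raise the interior supremum, while the exponential nonlinearity makes the energy on concentrating functions delicate; the resolution should be that for fixed geometry the sup over the cap is a continuous function of $\lambda$ that tends to a value one can bound below the threshold once $\lambda$ is large, because the $-\lambda F$ term is uniformly negative and large on the part of the cap where $\Phi$ is near its max. A secondary technical point is that the abstract theorem only delivers a \PS{c} sequence (since $\Phi$ need not satisfy \PS{} globally), but Proposition \ref{Proposition 1} is designed precisely to handle that, so no full Palais-Smale condition is needed. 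The case distinction $\mu = \mu_1$ versus $\mu > \mu_1$ (and $\mu$ crossing higher eigenvalues of \eqref{7}) only affects the dimension of the negative subspace and hence the index bookkeeping, not the structure of the argument.
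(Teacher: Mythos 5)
There is a genuine gap, and it sits exactly at the point the paper identifies as the main difficulty. Your construction of the linking sets relies on ``a higher-dimensional subspace spanned by eigenfunctions'' of \eqref{7} on which $\int_\Omega(|\nabla u|^q-\mu|u|^q)\,dx\le 0$, together with a small sphere in a complementary subspace. For $q\ne 2$ the operator $-\Delta_q$ has no linear eigenspaces: the functional $u\mapsto\int_\Omega|\nabla u|^q\,dx$ is not a quadratic form, so the inequality $\int_\Omega|\nabla u|^q\,dx\le\mu\int_\Omega|u|^q\,dx$ is not preserved under linear combinations of eigenfunctions, and on a topological complement of such a span there is no reverse inequality with $\mu_{k+1}$ either. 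Thus neither your ``negative'' set nor your ``positive'' small sphere can be built this way once $\mu\ge\mu_2$ (and even the positive part fails for $\mu_1\le\mu<\mu_2$), so the hypothesis \eqref{16} of Theorem \ref{Theorem 8} is never verified. The paper's proof replaces subspaces by sublevel and superlevel sets of the Rayleigh quotient: with $\mu_k\le\mu<\mu_{k+1}$ it takes $A_0=\pi_N(C\setminus\{0\})$, where $C$ is a symmetric cone inside $\pi_q^{-1}(\Psi^{\mu_k})\cup\{0\}$ with $C\cap M$ compact in $C^1(\Omega)$ and $i(C\setminus\{0\})=k$ (Degiovanni--Lancelotti \cite[Theorem 2.3]{MR2514055}), and $B_0=S_N\setminus(\pi_q^{-1}(S_q\setminus\Psi_{\mu_{k+1}})\cap W^{1,N}_0(\Omega))$; the index computation $i(A_0)=i(S_N\setminus B_0)=k$ then comes from \eqref{15} together with Palais's theorem \cite{MR0189028} to transfer the index from $W^{1,q}_0(\Omega)$ to the dense subspace $W^{1,N}_0(\Omega)$. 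None of this machinery (the index-defined eigenvalues, the compact cone, the dense-inclusion homotopy equivalence) appears in your outline, and without it the argument does not go through.

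A secondary remark: your anticipated hard point (iii) is actually the easy part once the sets are built correctly. Because $A_0$ is compact, the cap $X_0$ is compact, so $\alpha=\inf_{X_0}\int_\Omega|u|^N\,dx>0$ and $\beta=\sup_{X_0}\int_\Omega(|\nabla u|^q-\mu|u|^q)\,dx<\infty$, and the elementary bound $\sup\Phi(X)\le(\tfrac1q-\tfrac1N)(\beta^+)^{N/(N-q)}/(\widetilde\lambda\alpha)^{q/(N-q)}$ (with $F(u)\ge|u|^N/N$, $\widetilde\lambda=\lambda/2$) drops below $\alpha_N^{N-1}/N$ for $\lambda$ large; no Moser-type concentration or truncation estimate is needed. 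Your final steps (iv)--(v), feeding $0<c<\alpha_N^{N-1}/N$ into Proposition \ref{Proposition 1} to get a nontrivial weak limit of a \PS{c} sequence, do match the paper.
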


This extension of Theorem \ref{Theorem 5} is nontrivial. Indeed, the functional $\Phi$ does not have the mountain pass geometry when $\mu \ge \mu_1$ since the origin is no longer a local minimizer, and a linking type argument is needed. However, the classical linking theorem cannot be used since the nonlinear operator $- \Delta_q$ does not have linear eigenspaces. We will use a more general construction based on sublevel sets as in Perera and Szulkin \cite{MR2153141} (see also Perera et al.\! \cite[Proposition 3.23]{MR2640827}). Moreover, the standard sequence of eigenvalues of $- \Delta_q$ based on the genus does not give enough information about the structure of the sublevel sets to carry out this linking construction. Therefore we will use a different sequence of eigenvalues introduced in Perera \cite{MR1998432} that is based on a cohomological index.

The $\Z_2$-cohomological index of Fadell and Rabinowitz \cite{MR57:17677} is defined as follows. Let $W$ be a Banach space and let $\A$ denote the class of symmetric subsets of $W \setminus \set{0}$. For $A \in \A$, let $\overline{A} = A/\Z_2$ be the quotient space of $A$ with each $u$ and $-u$ identified, let $f : \overline{A} \to \RP^\infty$ be the classifying map of $\overline{A}$, and let $f^\ast : H^\ast(\RP^\infty) \to H^\ast(\overline{A})$ be the induced homomorphism of the Alexander-Spanier cohomology rings. The cohomological index of $A$ is defined by
\[
i(A) = \begin{cases}
\sup \set{m \ge 1 : f^\ast(\omega^{m-1}) \ne 0}, & A \ne \emptyset\\[5pt]
0, & A = \emptyset,
\end{cases}
\]
where $\omega \in H^1(\RP^\infty)$ is the generator of the polynomial ring $H^\ast(\RP^\infty) = \Z_2[\omega]$.

\begin{example}
The classifying map of the unit sphere $S^{m-1}$ in $\R^m,\, m \ge 1$ is the inclusion $\RP^{m-1} \incl \RP^\infty$, which induces isomorphisms on the cohomology groups $H^q$ for $q \le m - 1$, so $i(S^{m-1}) = m$.
\end{example}

The following proposition summarizes the basic properties of this index.

\begin{proposition}[Fadell-Rabinowitz \cite{MR57:17677}] \label{Proposition 7}
The index $i : \A \to \N \cup \set{0,\infty}$ has the following properties:
\begin{properties}{i}
\item Definiteness: $i(A) = 0$ if and only if $A = \emptyset$;
\item \label{i2} Monotonicity: If there is an odd continuous map from $A$ to $B$ (in particular, if $A \subset B$), then $i(A) \le i(B)$. Thus, equality holds when the map is an odd homeomorphism;
\item Dimension: $i(A) \le \dim W$;
\item Continuity: If $A$ is closed, then there is a closed neighborhood $N \in \A$ of $A$ such that $i(N) = i(A)$. When $A$ is compact, $N$ may be chosen to be a $\delta$-neighborhood $N_\delta(A) = \set{u \in W : \dist{u}{A} \le \delta}$;
\item Subadditivity: If $A$ and $B$ are closed, then $i(A \cup B) \le i(A) + i(B)$;
\item \label{i6} Stability: If $SA$ is the suspension of $A \ne \emptyset$, obtained as the quotient space of $A \times [-1,1]$ with $A \times \set{1}$ and $A \times \set{-1}$ collapsed to different points, then $i(SA) = i(A) + 1$;
\item \label{i7} Piercing property: If $A$, $A_0$ and $A_1$ are closed, and $\varphi : A \times [0,1] \to A_0 \cup A_1$ is a continuous map such that $\varphi(-u,t) = - \varphi(u,t)$ for all $(u,t) \in A \times [0,1]$, $\varphi(A \times [0,1])$ is closed, $\varphi(A \times \set{0}) \subset A_0$ and $\varphi(A \times \set{1}) \subset A_1$, then $i(\varphi(A \times [0,1]) \cap A_0 \cap A_1) \ge i(A)$;
\item Neighborhood of zero: If $U$ is a bounded closed symmetric neighborhood of $0$, then $i(\bdry{U}) = \dim W$.
\end{properties}
\end{proposition}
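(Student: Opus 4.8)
The plan is to derive each property directly from the definition of the index through the classifying map $f : \overline A \to \RP^\infty$ together with the functoriality and the standard exactness properties of Alexander-Spanier cohomology, following Fadell and Rabinowitz \cite{MR57:17677}; I indicate the main points and flag the one step that takes real work. Throughout I use that $i(A) \ge j$ is equivalent to $f^\ast(\omega^{j-1}) \ne 0$, and I write $m = i(A)$. Four of the properties come out quickly. Definiteness holds since a nonempty $\overline A$ has $f^\ast(\omega^0) = 1 \ne 0$ in $H^0(\overline A)$. For monotonicity, an odd continuous map $A \to B$ descends to $\overline A \to \overline B$ and covers it by a morphism of the associated principal $\Z_2$-bundles, so $f_B$ composed with it classifies the double cover $A \to \overline A$ and is homotopic to $f_A$; naturality then gives $i(A) \le i(B)$, with equality for an odd homeomorphism. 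For the dimension bound, when $\dim W = n < \infty$ the set $A$ lies in $\R^n \setminus \set{0}$, whose quotient deformation retracts onto $\RP^{n-1}$, so $f$ factors through $\RP^{n-1}$ and $\omega^m = 0$ there for $m \ge n$, forcing $i(A) \le n$. The neighborhood-of-zero property then follows from the dimension bound together with a Borsuk--Ulam argument --- equivalently, the Mayer--Vietoris sequence for the decomposition of $W \setminus \set{0}$ along $\bdry U$ --- which shows $i(\bdry U) \ge n$.

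I would next dispatch continuity, subadditivity, and stability. Continuity is precisely the tautness of Alexander-Spanier cohomology: for closed $A$ in the metric space $W$ one has $H^\ast(\overline A) = \varinjlim H^\ast(\overline N)$ over closed symmetric neighborhoods $N$ of $A$, with the $\delta$-neighborhoods cofinal when $A$ is compact, so the vanishing of $f_A^\ast(\omega^m)$ is already witnessed by $f_N^\ast(\omega^m) = 0$ for some such $N$; this gives $i(N) \le i(A)$, and monotonicity gives the reverse. Subadditivity uses $\overline{A \cup B} = \overline A \cup \overline B$ and the relative cup product: with $a = i(A)$, $b = i(B)$, the class $f^\ast(\omega^a)$ on $\overline{A \cup B}$ restricts to $f_A^\ast(\omega^a) = 0$ on $\overline A$ and $f^\ast(\omega^b)$ restricts to $0$ on $\overline B$, so they lift to $H^a(\overline{A \cup B}, \overline A)$ and $H^b(\overline{A \cup B}, \overline B)$, and their product lies in $H^{a+b}(\overline{A \cup B}, \overline A \cup \overline B) = H^{a+b}(\overline{A \cup B}, \overline{A \cup B}) = 0$; hence $f^\ast(\omega^{a+b}) = 0$ and $i(A \cup B) \le a + b$. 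Stability is a suspension computation: with the diagonal $\Z_2$-action $SA$ is the join $A \ast S^0$, $i(S^0) = 1$, and the Gysin-type exact sequence of the double cover $SA \to \overline{SA}$ shows that cup product with $\omega$ realizes the suspension isomorphism $H^{\ast-1}(\overline A) \cong \widetilde H^{\ast}(\overline{SA})$, which forces $i(SA) = i(A) + 1$.

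The main obstacle is the piercing property, which is the topological core of the proposition. I would set $B = \varphi(A \times [0,1])$, $B_0 = B \cap A_0$, $B_1 = B \cap A_1$, and $C = B \cap A_0 \cap A_1$, so that $B = B_0 \cup B_1$ and $C = B_0 \cap B_1$, all closed and symmetric. Since $[0,1]$ is contractible, $\varphi$ equivariantly covers $\overline\varphi : \overline A \times [0,1] \to \overline B$, so $f_B \circ \overline\varphi$ classifies the same double cover as $f_A$ composed with the cohomology-injective projection $\overline A \times [0,1] \to \overline A$, whence $f_B^\ast(\omega^{m-1}) \ne 0$; applying the same reasoning to the odd restrictions $\varphi(\cdot,0) : A \to B_0$ and $\varphi(\cdot,1) : A \to B_1$ gives $f_{B_0}^\ast(\omega^{m-1}) \ne 0$ and $f_{B_1}^\ast(\omega^{m-1}) \ne 0$. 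The plan is then to argue by contradiction: if $i(C) < m$, then $f_C^\ast(\omega^{m-1}) = 0$, so the classes $f_{B_0}^\ast(\omega^{m-1})$ and $f_{B_1}^\ast(\omega^{m-1})$ vanish on $\overline C$ and therefore lift to relative classes in $H^{m-1}(\overline{B_0}, \overline C)$ and $H^{m-1}(\overline{B_1}, \overline C)$; the delicate step --- which I expect to be the genuinely technical part and would extract carefully from \cite{MR57:17677} --- is to combine these lifts via the cohomology exact sequences of the pairs $(\overline B, \overline{B_0})$ and $(\overline B, \overline{B_1})$, the excision isomorphisms $H^\ast(\overline{B_i}, \overline C) \cong H^\ast(\overline B, \overline{B_{1-i}})$, and the multiplicativity of $\omega$ (after slightly enlarging $C$ to a symmetric neighborhood via continuity) so as to conclude $f_B^\ast(\omega^{m-1}) = 0$, contradicting the preceding. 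Hence $i(C) \ge m = i(A)$. All the cohomological inputs --- tautness, the relative cup product, and excision for closed subsets of a metric space --- are classical, and the complete argument for the piercing property is carried out in \cite{MR57:17677}.
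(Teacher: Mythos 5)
First, note what you are comparing against: the paper offers no proof of this proposition at all --- it is quoted verbatim from Fadell and Rabinowitz \cite{MR57:17677} and used as a black box --- so the only question is whether your sketches would actually reconstruct the cited result. For properties $(i_1)$--$(i_6)$ your outline is the standard one (naturality of the classifying map for monotonicity, tautness of Alexander--Spanier cohomology for continuity, the relative cup-product trick for subadditivity, the join/Gysin computation for stability) and is essentially fine, with the small caveat that the dimension and neighborhood-of-zero arguments as you state them are finite-dimensional; when $\dim W = \infty$ one gets $i(\bdry{U}) = \infty$ by intersecting $\bdry{U}$ with finite-dimensional subspaces and using monotonicity.

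The genuine gap is in the piercing property $(i_7)$, which is also the one property the paper actually needs (in the proof of Theorem \ref{Theorem 8}). After passing to the image you retain only the following data: $B = B_0 \cup B_1$ closed symmetric, $C = B_0 \cap B_1$, and $f_B^\ast(\omega^{m-1}),\, f_{B_0}^\ast(\omega^{m-1}),\, f_{B_1}^\ast(\omega^{m-1})$ all nonzero. This data cannot force $i(C) \ge m$, so no arrangement of exact sequences, excisions and lifts can reach your intended contradiction $f_B^\ast(\omega^{m-1}) = 0$. Concretely, in $\R^3$ let $B$ be a symmetric torus around the origin, $B_0$ its outer half and $B_1$ its inner half (the points at distance $\ge 1$, resp.\ $\le 1$, from the vertical axis): then $B_0$ and $B_1$ equivariantly retract onto the outer and inner equators, so $i(B) , i(B_0), i(B_1) \ge 2$, while $C$ is a pair of circles interchanged by the antipodal map, so $i(C) = 1$; in particular $f_B^\ast(\omega)$ is nonzero even though $f_C^\ast(\omega) = 0$, so the implication you hope to extract is false at the level of generality you set up. The actual proof must use the homotopy in an essential way: one works upstairs with the closed invariant sets $\varphi^{-1}(A_0),\, \varphi^{-1}(A_1)$, which cover $A \times [0,1]$ and contain $A \times \set{0}$ and $A \times \set{1}$ respectively, proves $i(\varphi^{-1}(A_0) \cap \varphi^{-1}(A_1)) \ge i(A)$ by a separation argument exploiting the product structure of $\overline{A} \times [0,1]$, and then transfers this to the image by monotonicity, since $\varphi$ maps $\varphi^{-1}(A_0) \cap \varphi^{-1}(A_1)$ oddly into $\varphi(A \times [0,1]) \cap A_0 \cap A_1$ (this is how Fadell--Rabinowitz, and Perera--Agarwal--O'Regan \cite{MR2640827}, argue). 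As written, your reduction discards exactly the structure that makes the property true.
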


The Dirichlet spectrum of $- \Delta_q$ in $\Omega$ consists of those $\mu \in \R$ for which problem \eqref{7} has a nontrivial solution. Although a complete description of the spectrum is not yet known when $N \ge 2$, we can define an increasing and unbounded sequence of eigenvalues via a suitable minimax scheme. The standard scheme based on the genus does not give the index information necessary to prove Theorem \ref{Theorem 6}, so we will use the following scheme based on the cohomological index as in Perera \cite{MR1998432}. Let
\[
\Psi(u) = \frac{1}{\dint_\Omega |u|^q\, dx}, \quad u \in S_q = \set{u \in W^{1,q}_0(\Omega) : \int_\Omega |\nabla u|^q\, dx = 1}.
\]
Then eigenvalues of problem \eqref{7} on $S_q$ coincide with critical values of $\Psi$. We use the standard notation
\[
\Psi^a = \set{u \in S_q : \Psi(u) \le a}, \quad \Psi_a = \set{u \in S_q : \Psi(u) \ge a}, \quad a \in \R
\]
for the sublevel sets and superlevel sets, respectively. Let $\F$ denote the class of symmetric subsets of $S_q$ and set
\[
\mu_k := \inf_{M \in \F,\; i(M) \ge k}\, \sup_{u \in M}\, \Psi(u), \quad k \in \N.
\]
Then $0 < \mu_1 < \mu_2 \le \mu_3 \le \cdots \to + \infty$ is a sequence of eigenvalues of problem \eqref{7} and
\begin{equation} \label{15}
\mu_k < \mu_{k+1} \implies i(\Psi^{\mu_k}) = i(S_q \setminus \Psi_{\mu_{k+1}}) = k
\end{equation}
(see Perera et al.\! \cite[Propositions 3.52 and 3.53]{MR2640827}).

Proof of Theorem \ref{Theorem 6} will make essential use of \eqref{15} and will be based on the following abstract critical point theorem, which is of independent interest. Let $W$ be a Banach space, let
\[
S = \set{u \in W : \norm{u} = 1}
\]
be the unit sphere in $W$, and let
\[
\pi : W \setminus \set{0} \to S, \quad u \mapsto \frac{u}{\norm{u}}
\]
be the radial projection onto $S$.

\begin{theorem} \label{Theorem 8}
Let $\Phi$ be a $C^1$-functional on $W$ and let $A_0,\, B_0$ be disjoint nonempty closed symmetric subsets of $S$ such that
\begin{equation} \label{16}
i(A_0) = i(S \setminus B_0) < \infty.
\end{equation}
Assume that there exist $R > r > 0$ and $v \in S \setminus A_0$ such that
\[
\sup \Phi(A) \le \inf \Phi(B), \qquad \sup \Phi(X) < \infty,
\]
where
\begin{gather*}
A = \set{tu : u \in A_0,\, 0 \le t \le R} \cup \set{R\, \pi((1 - t)\, u + tv) : u \in A_0,\, 0 \le t \le 1},\\[5pt]
B = \set{ru : u \in B_0},\\[5pt]
X = \set{tu : u \in A,\, \norm{u} = R,\, 0 \le t \le 1}.
\end{gather*}
Let $\Gamma = \set{\gamma \in C(X,W) : \gamma(X) \text{ is closed and} \restr{\gamma}{A} = \id{A}}$ and set
\[
c := \inf_{\gamma \in \Gamma}\, \sup_{u \in \gamma(X)}\, \Phi(u).
\]
Then
\[
\inf \Phi(B) \le c \le \sup \Phi(X)
\]
and $\Phi$ has a {\em \PS{c}} sequence.
\end{theorem}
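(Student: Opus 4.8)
The plan is to reduce the theorem to a single geometric fact --- the homotopical linking
\[
\gamma(X) \cap B \ne \emptyset \qquad \text{for every } \gamma \in \Gamma
\]
--- and to deduce everything from it. Granting this, any $p \in \gamma(X) \cap B$ gives $\sup_{\gamma(X)} \Phi \ge \Phi(p) \ge \inf \Phi(B)$, so $c \ge \inf \Phi(B)$ (finite, since $A \ne \emptyset$ and $\sup \Phi(A) \le \inf \Phi(B)$ force $\inf \Phi(B) > - \infty$); the bound $c \le \sup \Phi(X)$ is immediate because $A \subseteq X$ and $X$ is closed, so $\id{X} \in \Gamma$. For the \PS{c} sequence I would argue by contradiction: if there is none, then $\norm{\Phi'(u)} \ge \delta$ whenever $|\Phi(u) - c| \le \varepsilon$ for suitable $\varepsilon, \delta > 0$, so (after shrinking $\varepsilon$ so that also $A \subseteq \Phi^{c - 2\varepsilon}$, using $\sup \Phi(A) < c$) the quantitative deformation lemma yields a deformation $\eta$ fixing $\Phi^{c - 2\varepsilon} \supseteq A$ with $\eta(\Phi^{c + \varepsilon}) \subseteq \Phi^{c - \varepsilon}$; taking $\gamma_0 \in \Gamma$ with $\sup_{\gamma_0(X)} \Phi < c + \varepsilon$, the map $\eta \circ \gamma_0$ still lies in $\Gamma$ (it is continuous, its image is closed since $\eta$ is a homeomorphism, and it restricts to $\id{A}$ on $A$ because $\eta$ fixes $A$), while $\sup_{\eta(\gamma_0(X))} \Phi \le c - \varepsilon$ --- contradicting the definition of $c$. (The borderline case $\sup \Phi(A) = \inf \Phi(B) = c$ calls for the usual minor refinement.)

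The substance is thus the linking, which I would prove by a symmetrization plus the piercing property \ref{i7}, in the spirit of Perera--Szulkin \cite{MR2153141} (cf.\ \cite[Proposition 3.23]{MR2640827}). Suppose some $\gamma \in \Gamma$ has $\gamma(X) \cap rB_0 = \emptyset$. First extend $\gamma$ to an odd continuous map $\widehat{\gamma}$ on $\widehat{X} := X \cup (-X)$ by $\widehat{\gamma}|_X = \gamma$ and $\widehat{\gamma}(x) = -\gamma(-x)$ on $-X$; the two prescriptions agree on the overlap, where $\gamma$ is the identity (equivalently, one realizes the symmetric domain abstractly as the cone over the suspension $\Sigma A_0$, the two halves being glued along $A_0$). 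Then $\widehat{\gamma}(\widehat{X}) = \gamma(X) \cup (-\gamma(X))$ is closed and, since $B_0 = -B_0$, still misses $rB_0$. The reason for passing to $\widehat{X}$ is that it is exactly the cone from $0$ over $R\, D_\pm$, where $D = \set{\pi((1 - t)\, u + t v) : u \in A_0,\ 0 \le t \le 1}$ is the ``disk'' swept out in $S$ by coning $A_0$ off to $v$ and $D_\pm := D \cup (-D)$; moreover $\widehat{\gamma}$ is the identity on the top $R\, D_\pm$ of this cone. Because $(u,t) \mapsto \pi((1 - |t|)\, u + t v)$ is a continuous odd surjection of $\Sigma A_0$ onto $D_\pm$ (collapsing $A_0$ at $t = \pm 1$), monotonicity \ref{i2} and stability \ref{i6} give $i(R\, D_\pm) = i(D_\pm) \ge i(\Sigma A_0) = i(A_0) + 1$.

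Now I would estimate the index of $\widehat{P} := \widehat{\gamma}(\widehat{X}) \cap \set{u : \norm{u} = r}$, a closed symmetric set, in two ways. Since $\widehat{P} \cap rB_0 = \emptyset$, the radial projection $\pi$ is an odd continuous map of $\widehat{P}$ into $S \setminus B_0$, so $i(\widehat{P}) \le i(S \setminus B_0)$ by \ref{i2}. On the other hand, apply \ref{i7} with $R\, D_\pm$ in the role of ``$A$'', the closed symmetric sets $\set{w \in \widehat{\gamma}(\widehat{X}) : \norm{w} \le r}$ and $\set{w \in \widehat{\gamma}(\widehat{X}) : \norm{w} \ge r}$ in the roles of ``$A_0$'' and ``$A_1$'', and $\varphi(u,s) := \widehat{\gamma}(s u)$ on $R\, D_\pm \times [0,1]$: the cone structure shows $\varphi(R\, D_\pm \times [0,1]) = \widehat{\gamma}(\widehat{X})$ is closed, $\varphi$ is odd in $u$, $\varphi(\cdot,0) \equiv 0$ lands in the first set, and $\varphi(\cdot,1) = \id{R\, D_\pm}$ lands in the second since $R > r$; hence $i(\widehat{P}) \ge i(R\, D_\pm) \ge i(A_0) + 1$. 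Combining with the hypothesis $i(A_0) = i(S \setminus B_0) < \infty$ forces $i(A_0) + 1 \le i(A_0)$, a contradiction. This proves the linking, and the theorem follows.

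The main obstacle is this linking step, and within it the bookkeeping around the piercing property: seeing $\widehat{X}$ as the cone over $R\, D_\pm$, recognizing $R\, D_\pm$ as a quotient of $\Sigma A_0$ (which is what buys the crucial ``$+1$'' in the index estimate), and checking the hypotheses of \ref{i7} --- above all that the relevant images are closed, which is precisely where the standing assumptions (that $\gamma(X)$, and $A$, $X$, are closed) are used. Pinning $c$ between $\inf \Phi(B)$ and $\sup \Phi(X)$ and manufacturing a \PS{c} sequence by deformation are then routine.
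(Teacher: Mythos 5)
Your argument is essentially the paper's own proof: the same reduction to the homotopical linking $\gamma(X)\cap B\ne\emptyset$, the same odd extension of $\gamma$ (the paper writes it as a two-case map $\varphi$ on $\widetilde{A}\times[0,1]$, you as $\widehat{\gamma}$ on $X\cup(-X)$ composed with the cone parameter), the same lower bound $i\ge i(A_0)+1$ via monotonicity \ref{i2} and stability \ref{i6} applied to the suspension of $A_0$, the same use of the piercing property \ref{i7} at the sphere of radius $r$ followed by radial projection into $S\setminus B_0$ to contradict \eqref{16}, and the same standard deformation argument for the \PS{c} sequence, which the paper simply delegates to Ghoussoub. The only cosmetic differences are that you intersect the two piercing sets with $\widehat{\gamma}(X\cup(-X))$ rather than taking the full ball $\set{\norm{u}\le r}$ and its complement, and your gluing claim on the overlap $X\cap(-X)$ is stated with exactly the same level of precision as the paper's continuity claim for $\varphi$, so there is no substantive divergence.
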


\begin{remark}
Theorem \ref{Theorem 8}, which does not require a direct sum decomposition, generalizes the linking theorem of Rabinowitz \cite{MR0488128}.
\end{remark}

\section{Preliminaries}

In this preliminary section we prove Proposition \ref{Proposition 1} and Theorem \ref{Theorem 8}. First we obtain some estimates for the primitive $F$ defined in \eqref{2.1}.

\begin{lemma}
For all $t \in \R$,
\begin{gather}
\label{9} F(t) - \frac{|t|^N}{2N}\, e^{\, |t|^{N'}} \le C;\\[5pt]
\label{10} F(t) \le |t|^{N-1}\, e^{\, |t|^{N'}} + C,
\end{gather}
where $C$ denotes a generic positive constant.
\end{lemma}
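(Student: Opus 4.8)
We need to establish the two pointwise inequalities \eqref{9} and \eqref{10} for
\[
F(t) = \int_0^{|t|} s^{N-1}\, e^{\, s^{N'}} ds.
\]
Both are statements about the growth of $F$ compared to $|t|^{N-1} e^{|t|^{N'}}$, so the plan is to integrate by parts once to extract the dominant exponential term and then bound the remainder uniformly. Since $F(t)$ depends only on $|t|$, I may assume $t \ge 0$ and write $F(t) = \int_0^t s^{N-1} e^{\, s^{N'}}\, ds$; for $t$ in any bounded set all quantities are bounded, so it suffices to argue for $t$ large.

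For \eqref{10}, the key observation is that $\frac{d}{ds}\bigl(e^{\, s^{N'}}\bigr) = N'\, s^{N'-1} e^{\, s^{N'}}$ and $N' - 1 = N'/N$, so $s^{N-1} = \frac{N}{N'}\, s^{N-1-N'/N}\cdot \frac{N'}{N} s^{N'/N}$; more directly, integrate by parts with $u = s^{N-1}$ is awkward, so instead I would split the integral at $s = 1$. On $[0,1]$ the integrand is bounded, contributing a constant. On $[1,t]$ we have $s^{N-1} \le s^{N'-1}\cdot s^{N-N'}$ — wait, the clean route is: for $s \ge 1$, since $N' \le N$ (as $N \ge 2$) we may not have a simple monomial comparison, so instead use the substitution $r = s^{N'}$, giving $ds = \frac{1}{N'} r^{1/N' - 1}\, dr$ and $s^{N-1} = r^{(N-1)/N'}$, hence
\[
F(t) = \frac{1}{N'}\int_0^{t^{N'}} r^{(N-1)/N' + 1/N' - 1}\, e^{\, r}\, dr = \frac{1}{N'}\int_0^{t^{N'}} r^{N-1-(N-1)}\! \cdots
\]
— let me recompute the exponent: $(N-1)/N' + 1/N' - 1 = N/N' - 1 = (N - N')/N' $. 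Since $N' = N/(N-1)$, $N - N' = N(N-2)/(N-1)$, so the exponent is $N-2 \ge 0$. Thus $F(t) = \frac{1}{N'}\int_0^{T} r^{N-2} e^r\, dr$ with $T = t^{N'}$, and repeated integration by parts gives $\int_0^T r^{N-2} e^r\, dr = e^T\, P(T) + C$ where $P$ is a polynomial of degree $N-2$ with leading term $T^{N-2}$. Hence $F(t) = \frac{1}{N'}\, t^{N'(N-2)}\, e^{\, t^{N'}}(1 + o(1)) + C$, and since $N'(N-2) = N - 2 \cdot \frac{N}{N-1} \cdot \frac{N-1}{N}$ — again $N'(N-2) = \frac{N(N-2)}{N-1} = N - \frac{N}{N-1} = N - N'$; comparing $N - N'$ with $N - 1$: we have $N' \ge 1$ so $N - N' \le N - 1$, with equality iff $N' = 1$, i.e. never for finite $N$. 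So $t^{N-N'} = o(t^{N-1})$, and \eqref{10} follows: $F(t) \le |t|^{N-1} e^{|t|^{N'}} + C$ for $t$ large, hence for all $t$ after enlarging $C$.

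For \eqref{9}, I need the sharper statement that $F(t) - \frac{|t|^N}{2N} e^{|t|^{N'}}$ is bounded \emph{above}. From the computation above, $F(t) = \frac{1}{N'}\, e^T P(T) + C$ with $T = t^{N'}$ and $\deg P = N-2$, while $\frac{|t|^N}{2N} e^{|t|^{N'}} = \frac{1}{2N}\, T^{N/N'}\, e^T = \frac{1}{2N}\, T^{N-1}\, e^T$ — here I use $N/N' = N-1$. Since $N - 1 > N - 2$, the subtracted term dominates $F(t)$ for large $t$: $\frac{1}{N'} e^T P(T) - \frac{1}{2N} T^{N-1} e^T \to -\infty$, so certainly $F(t) - \frac{|t|^N}{2N} e^{|t|^{N'}} \le C$ for large $t$, and for bounded $t$ the left side is bounded. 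Combining, \eqref{9} holds with a suitable generic constant.

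The only mild subtlety — the ``main obstacle,'' such as it is — is keeping the exponent bookkeeping straight ($N' = N/(N-1)$, $N/N' = N-1$, $N - N' = N'(N-2)$) and confirming that the polynomial arising from iterated integration by parts has the asserted degree $N-2$ with positive leading coefficient; once that is in hand, both inequalities are immediate since in each case the explicitly subtracted term grows strictly faster (for \eqref{9}) or at least as fast (for \eqref{10}) as the leading term of $F$.
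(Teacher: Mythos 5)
Your argument is correct, but it takes a different route from the paper. After substituting $r=s^{N'}$ you identify the exact leading behaviour $F(t)\le \tfrac{C}{N'}\,|t|^{\,N-N'}e^{\,|t|^{N'}}+C$ (since $\int_0^T r^{N-2}e^r\,dr = e^TP(T)+\mathrm{const}$ with $\deg P=N-2$), and then both \eqref{9} and \eqref{10} drop out because $N-N' < N-1 < N$; in effect you prove a single stronger estimate from which the lemma is immediate. The paper instead performs one integration by parts in the $s$-variable for each inequality: for \eqref{10} it extracts $\tfrac{|t|^{N-N'}}{N'}e^{\,|t|^{N'}}$ and discards the negative remainder (the same leading term you found), while for \eqref{9} it extracts $\tfrac{|t|^N}{N}e^{\,|t|^{N'}}$ and bounds the remainder integral from below by $F(t)-C$, so that $F$ reabsorbs itself and $2F(t)\le \tfrac{|t|^N}{N}e^{\,|t|^{N'}}+C$ follows without any asymptotic analysis. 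Your version buys more (e.g.\ it shows $F(t)\le \varepsilon\,|t|^N e^{\,|t|^{N'}}+C_\varepsilon$ for every $\varepsilon>0$), at the cost of the iterated integration by parts, which implicitly uses that $N$ is an integer $\ge 2$ so that $N-2$ is a nonnegative integer; this is harmless here since $N$ is the space dimension, and in any case you could avoid it entirely by the cruder bound $r^{N-2}\le T^{N-2}$ on $[0,T]$, which gives the same upper bound $\tfrac{1}{N'}T^{N-2}e^{T}$. Two cosmetic points: the identity you want is $F(t)=\tfrac{1}{N'}\bigl(e^TP(T)-P(0)\bigr)$ with a fixed constant rather than a generic $+C$, and the final write-up should suppress the exploratory false starts, keeping only the substitution, the evaluation of $\int_0^T r^{N-2}e^r\,dr$, and the exponent comparison $N'(N-2)=N-N'<N-1<N$.
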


\begin{proof}
Integrating by parts gives
\[
F(t) = \frac{|t|^N}{N}\, e^{\, |t|^{N'}} - \frac{N'}{N} \int_0^{|t|} s^{N+N'-1}\, e^{\, s^{N'}} ds.
\]
For $|t| \ge (N/N')^{1/N'}$, the last term is greater than or equal to
\[
\frac{N'}{N} \int_{(N/N')^{1/N'}}^{|t|} s^{N+N'-1}\, e^{\, s^{N'}} ds \ge \int_{(N/N')^{1/N'}}^{|t|} s^{N-1}\, e^{\, s^{N'}} ds = F(t) - F((N/N')^{1/N'})
\]
and hence
\[
2F(t) - \frac{|t|^N}{N}\, e^{\, |t|^{N'}} \le F((N/N')^{1/N'}).
\]
Since $F$ is bounded on bounded sets, \eqref{9} follows. As for \eqref{10},
\[
F(t) = \frac{|t|^{N-N'}}{N'}\, e^{\, |t|^{N'}} - \frac{N - N'}{N'} \int_0^{|t|} s^{N-N'-1}\, e^{\, s^{N'}} ds \le |t|^{N-1}\, e^{\, |t|^{N'}}
\]
for $|t| \ge 1/(N')^{1/(N'-1)}$.
\end{proof}

\begin{proof}[Proof of Proposition \ref{Proposition 1}]
Let $0 \ne c < \alpha_N^{N-1}/N$ and let $\seq{u_j}$ be a \PS{c} sequence. Then
\begin{equation} \label{4}
\Phi(u_j) = \int_\Omega \left[\frac{1}{N}\, |\nabla u_j|^N + \frac{1}{q}\, |\nabla u_j|^q - \frac{\mu}{q}\, |u_j|^q - \lambda\, F(u_j)\right] dx = c + \o(1)
\end{equation}
and
\begin{multline} \label{8}
\Phi'(u_j)\, v = \int_\Omega \bigg[\Big(|\nabla u_j|^{N-2} + |\nabla u_j|^{q-2}\Big)\, \nabla u_j \cdot \nabla v\\[5pt]
- \left(\mu\, |u_j|^{q-2} + \lambda\, |u_j|^{N-2}\, e^{\, |u_j|^{N'}}\right) u_j\, v\bigg]\, dx = \o(1) \norm[N]{\nabla v} \quad \forall v \in W^{1,N}_0(\Omega),
\end{multline}
in particular,
\begin{equation} \label{5}
\Phi'(u_j)\, u_j = \int_\Omega \left(|\nabla u_j|^N + |\nabla u_j|^q - \mu\, |u_j|^q - \lambda\, |u_j|^N\, e^{\, |u_j|^{N'}}\right) dx = \o(1) \norm[N]{\nabla u_j}.
\end{equation}
By \eqref{4}, \eqref{5}, and \eqref{9},
\[
\int_\Omega \left[\frac{1}{2N}\, |\nabla u_j|^N + \left(\frac{1}{q} - \frac{1}{2N}\right)\! \left(|\nabla u_j|^q - \mu\, |u_j|^q\right)\right] dx = \o(1) \norm[N]{\nabla u_j} + \O(1).
\]
Since $N > q > 1$, it follows from this that the sequence $\seq{u_j}$ is bounded in $W^{1,N}_0(\Omega)$. So a renamed subsequence converges to some $u$ weakly in $W^{1,N}_0(\Omega)$, strongly in $L^s(\Omega)$ for all $1 \le s < \infty$, and a.e.\! in $\Omega$. Since $\int_\Omega |u_j|^N\, e^{\, |u_j|^{N'}} dx$ is bounded by \eqref{5}, then for any $v \in C^\infty_0(\Omega)$,
\[
\int_\Omega |u_j|^{N-2}\, u_j\, e^{\, |u_j|^{N'}} v\, dx \to \int_\Omega |u|^{N-2}\, u\, e^{\, |u|^{N'}} v\, dx
\]
by de Figueiredo et al.\! \cite[Lemma 2.1]{MR1386960} and hence passing to the limit in \eqref{8} gives
\[
\int_\Omega \bigg[\Big(|\nabla u|^{N-2} + |\nabla u|^{q-2}\Big)\, \nabla u \cdot \nabla v - \left(\mu\, |u|^{q-2} + \lambda\, |u|^{N-2}\, e^{\, |u|^{N'}}\right) uv\bigg]\, dx = 0.
\]
This then holds for all $v \in W^{1,N}_0(\Omega)$ by density, so $u$ is a critical point of $\Phi$.

Suppose $u = 0$. Then
\[
\int_\Omega |u_j|^{N-1}\, e^{\, |u_j|^{N'}} dx \to 0
\]
by de Figueiredo et al.\! \cite[Lemma 2.1]{MR1386960} as above and hence
\[
\int_\Omega F(u_j)\, dx \to 0
\]
by \eqref{10} and the dominated convergence theorem, so \eqref{4} gives
\[
\int_\Omega \left[\frac{1}{N}\, |\nabla u_j|^N + \frac{1}{q}\, |\nabla u_j|^q\right] dx \to c.
\]
Since $c < \alpha_N^{N-1}/N$, then $\varlimsup\, \norm[N]{\nabla u_j} < \alpha_N^{1/N'}$, so there exists $\beta > 1/\alpha_N^{1/N'}$ such that $\beta \norm[N]{\nabla u_j} \le 1$ for all sufficiently large $j$. For $1 < \gamma < \infty$ given by $1/\alpha_N \beta^{N'} + 1/\gamma = 1$, then
\[
\int_\Omega |u_j|^N\, e^{\, |u_j|^{N'}} dx \le \left(\int_\Omega |u_j|^{\gamma N}\, dx\right)^{1/\gamma} \left(\int_\Omega e^{\, \alpha_N\, |\beta u_j|^{N'}} dx\right)^{1/\alpha_N \beta^{N'}} \to 0
\]
since $u_j \to 0$ in $L^{\gamma N}(\Omega)$ and the last integral is bounded by \eqref{1.3}. Then
\[
\int_\Omega \left(|\nabla u_j|^N + |\nabla u_j|^q\right) dx \to 0
\]
by \eqref{5} and hence $u_j \to 0$ in $W^{1,N}_0(\Omega)$, so $\Phi(u_j) \to 0$, contradicting $c \ne 0$.
\end{proof}

\begin{proof}[Proof of Theorem \ref{Theorem 8}]
First we show that $A$ (homotopically) links $B$ with respect to $X$ in the sense that
\begin{equation} \label{17}
\gamma(X) \cap B \ne \emptyset \quad \forall \gamma \in \Gamma.
\end{equation}
If \eqref{17} does not hold, then there is a map $\gamma \in C(X,W \setminus B)$ such that $\gamma(X)$ is closed and $\restr{\gamma}{A} = \id{A}$. Let
\[
\widetilde{A} = \set{R\, \pi((1 - |t|)\, u + tv) : u \in A_0,\, -1 \le t \le 1}
\]
and note that $\widetilde{A}$ is closed since $A_0$ is closed (here $(1 - |t|)\, u + tv \ne 0$ since $v$ is not in the symmetric set $A_0$). Since
\[
SA_0 \to \widetilde{A}, \quad (u,t) \mapsto R\, \pi((1 - |t|)\, u + tv)
\]
is an odd continuous map,
\begin{equation} \label{18}
i(\widetilde{A}) \ge i(SA_0) = i(A_0) + 1
\end{equation}
by \ref{i2} and \ref{i6} of Proposition \ref{Proposition 7}. Consider the map
\[
\varphi : \widetilde{A} \times [0,1] \to W \setminus B, \quad \varphi(u,t) = \begin{cases}
\gamma(tu), & u \in \widetilde{A} \cap A\\[5pt]
- \gamma(-tu), & u \in \widetilde{A} \setminus A,
\end{cases}
\]
which is continuous since $\gamma$ is the identity on the symmetric set $\set{tu : u \in A_0,\, 0 \le t \le R}$. We have $\varphi(-u,t) = - \varphi(u,t)$ for all $(u,t) \in \widetilde{A} \times [0,1]$, $\varphi(\widetilde{A} \times [0,1]) = \gamma(X) \cup (- \gamma(X))$ is closed, and $\varphi(\widetilde{A} \times \set{0}) = \set{0}$ and $\varphi(\widetilde{A} \times \set{1}) = \widetilde{A}$ since $\restr{\gamma}{A} = \id{A}$. Applying \ref{i7} with $\widetilde{A}_0 = \set{u \in W : \norm{u} \le r}$ and $\widetilde{A}_1 = \set{u \in W : \norm{u} \ge r}$ gives
\begin{equation} \label{19}
i(\widetilde{A}) \le i(\varphi(\widetilde{A} \times [0,1]) \cap \widetilde{A}_0 \cap \widetilde{A}_1) \le i((W \setminus B) \cap S_r) = i(S_r \setminus B) = i(S \setminus B_0),
\end{equation}
where $S_r = \set{u \in W : \norm{u} = r}$. By \eqref{18} and \eqref{19}, $i(A_0) < i(S \setminus B_0)$, contradicting \eqref{16}. Hence \eqref{17} holds.

It follows from \eqref{17} that $c \ge \inf \Phi(B)$, and $c \le \sup \Phi(X)$ since $\id{X} \in \Gamma$. By a standard argument, $\Phi$ has a \PS{c} sequence (see, e.g., Ghoussoub \cite{MR1251958}).
\end{proof}

\begin{remark}
The linking construction in the above proof was used in Perera and Szulkin \cite{MR2153141} to obtain nontrivial solutions of $p$-Laplacian problems with nonlinearities that interact with the spectrum. A similar construction based on the notion of cohomological linking was given in Degiovanni and Lancelotti \cite{MR2371112}. See also Perera et al.\! \cite[Proposition 3.23]{MR2640827}.
\end{remark}

\section{Proof of Theorem \ref{Theorem 5}}

Fix $u_0 > 0$ in $W^{1,N}_0(\Omega)$ such that $\norm[N+N']{u_0} = 1$. Since $e^t \ge t$ for all $t \ge 0$,
\[
F(t) \ge \frac{|t|^{N+N'}}{N + N'} \quad \forall t \in \R
\]
and hence
\begin{equation} \label{12}
\Phi^+(t u_0) \le \int_\Omega \left[\frac{t^N}{N}\, |\nabla u_0|^N + \frac{t^q}{q}\, |\nabla u_0|^q - \frac{\mu\, t^q}{q}\, u_0^q\right] dx - \frac{\lambda\, t^{N+N'}}{N + N'} \to - \infty
\end{equation}
as $t \to + \infty$. Take $t_0 > 0$ so large that $\Phi^+(t_0 u_0) \le 0$, let
\[
\Gamma = \set{\gamma \in C([0,1],W^{1,N}_0(\Omega)) : \gamma(0) = 0,\, \gamma(1) = t_0 u_0}
\]
be the class of paths joining $0$ and $t_0 u_0$, and set
\[
c := \inf_{\gamma \in \Gamma}\, \max_{u \in \gamma([0,1])}\, \Phi^+(u).
\]

\begin{lemma} \label{Lemma 2}
If $0 < c < \alpha_N^{N-1}/N$, then problem \eqref{1} has a nonnegative nontrivial solution.
\end{lemma}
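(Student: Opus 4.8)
The plan is to recognize $c$ as a mountain pass level for $\Phi^+$, extract a Palais--Smale sequence, pass to positive parts to convert it into a \PS{c} sequence for $\Phi$, and then apply Proposition~\ref{Proposition 1}.

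First I would record the mountain pass geometry of $\Phi^+$: we have $\Phi^+(0) = 0$, $\Phi^+(t_0 u_0) \le 0$ by the choice of $t_0$, and the standing hypothesis $c > 0$ says exactly that every path in $\Gamma$ must rise strictly above both endpoint values. Since $c \le \max_{t \in [0,1]} \Phi^+(t\, t_0 u_0) < \infty$, the mountain pass theorem in the form that produces a Palais--Smale sequence without assuming any compactness condition (via the quantitative deformation lemma; see, e.g., Ghoussoub \cite{MR1251958}) yields a sequence $\seq{u_j} \subset W^{1,N}_0(\Omega)$ with $\Phi^+(u_j) \to c$ and ${\Phi^+}'(u_j) \to 0$ in the dual of $W^{1,N}_0(\Omega)$.

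Next I would reduce to $\Phi$ via positive parts. Testing ${\Phi^+}'(u_j)$ on $u_j^-$ gives $\int_\Omega \seq{|\nabla u_j^-|^N + |\nabla u_j^-|^q}\, dx = \o(1)\, \norm[N]{\nabla u_j^-}$, while repeating verbatim the boundedness argument from the proof of Proposition~\ref{Proposition 1} --- now with $\Phi^+(u_j) = c + \o(1)$, the corresponding identity for ${\Phi^+}'(u_j)\, u_j$, estimate \eqref{9} applied with $t = u_j^+$, and the Sobolev embedding $W^{1,N}_0(\Omega) \hookrightarrow L^q(\Omega)$ --- shows that $\seq{u_j}$ is bounded in $W^{1,N}_0(\Omega)$. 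Hence $\norm[N]{\nabla u_j^-} \to 0$, so $u_j^- \to 0$ strongly in $W^{1,N}_0(\Omega)$ and in $W^{1,q}_0(\Omega)$. Splitting $\nabla u_j = \nabla u_j^+ - \nabla u_j^-$ into parts with disjoint supports then gives $\Phi(u_j^+) = \Phi^+(u_j) + \o(1) \to c$ and $\Phi'(u_j^+) = {\Phi^+}'(u_j) + \o(1) \to 0$ in the dual, i.e., $\seq{u_j^+}$ is a \PS{c} sequence for $\Phi$.

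Finally, since $0 < c < \alpha_N^{N-1}/N$ and in particular $c \ne 0$, Proposition~\ref{Proposition 1} applies to $\seq{u_j^+}$ and yields a subsequence converging weakly in $W^{1,N}_0(\Omega)$ --- and, by the argument in that proof, a.e.\! in $\Omega$ --- to a nontrivial critical point $u$ of $\Phi$. Because each $u_j^+ \ge 0$, the a.e.\! limit satisfies $u \ge 0$, so $u$ is a nonnegative nontrivial weak solution of problem \eqref{1}. I do not expect a serious obstacle: the loss of compactness below the Trudinger--Moser threshold has already been dealt with in Proposition~\ref{Proposition 1}, and the verification that the minimax level genuinely lies below $\alpha_N^{N-1}/N$ is a separate matter, to be handled when finishing the proof of Theorem~\ref{Theorem 5} by choosing $u_0$ and taking $\lambda$ large. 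The only points needing a little care are the precise form of the mountain pass theorem invoked and the routine check that taking positive parts preserves the Palais--Smale property; alternatively, one can bypass the latter by observing that the proof of Proposition~\ref{Proposition 1} carries over verbatim with $\Phi$ replaced by $\Phi^+$ (estimates \eqref{9}, \eqref{10} and the de Figueiredo et al.\! compactness lemma apply just as well to the truncated nonlinearity), directly producing a nonnegative nontrivial critical point of $\Phi^+$.
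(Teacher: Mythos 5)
Your proposal is correct, and it reaches the result by a mild variant of the paper's own argument. The paper's proof is a two-liner: the mountain pass theorem gives a \PS{c} sequence for $\Phi^+$, and the proof of Proposition \ref{Proposition 1} is repeated with $\Phi$ replaced by $\Phi^+$ (the estimates \eqref{9}, \eqref{10} and the de Figueiredo--Miyagaki--Ruf lemma apply to the truncated nonlinearity just as you observe), yielding a nontrivial critical point of $\Phi^+$; its nonnegativity and the fact that it solves \eqref{1} then come from the computation ${\Phi^+}'(u)\, u^- = \norm[N]{\nabla u^-}^N + \norm[q]{\nabla u^-}^q = 0$ recorded before Theorem \ref{Theorem 5}. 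Your primary route instead converts the \PS{c} sequence for $\Phi^+$ into one for $\Phi$: testing ${\Phi^+}'(u_j)$ on $u_j^-$ gives $\norm[N]{\nabla u_j^-}^N + \norm[q]{\nabla u_j^-}^q = \o(1)\norm[N]{\nabla u_j^-}$, hence $u_j^- \to 0$ (note this already follows from that identity alone, without the boundedness of $\seq{u_j}$, since $\norm[N]{\nabla u_j^-}^{N-1} = \o(1)$), and the disjoint-support splitting then gives $\Phi(u_j^+) \to c$ and $\Phi'(u_j^+) \to 0$, so Proposition \ref{Proposition 1} can be invoked as a black box and nonnegativity of the limit follows from a.e.\ convergence of the $u_j^+ \ge 0$. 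What your route buys is that Proposition \ref{Proposition 1} is used verbatim rather than re-proved for $\Phi^+$, at the cost of the (routine but necessary) verification that passing to positive parts preserves the Palais--Smale property; the alternative you mention at the end is exactly the paper's proof.
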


\begin{proof}
By the mountain pass theorem, $\Phi^+$ has a \PS{c} sequence $\seq{u_j}$. An argument similar to that in the proof of Proposition \ref{Proposition 1} shows that a subsequence of $\seq{u_j}$ converges weakly to a nontrivial critical point of $\Phi^+$.
\end{proof}

We have the following upper bound for $c$.

\begin{lemma} \label{Lemma 3}
Let $\widetilde{\lambda} = \lambda/2$. Then
\begin{multline*}
c \le \frac{1}{N^2\, \widetilde{\lambda}^{N-1}} \left(\int_\Omega |\nabla u_0|^N\, dx\right)^N\\[5pt]
+ \frac{N + N' - q}{q\, (N + N')\, \widetilde{\lambda}^{q/(N+N'-q)}} \left[\int_\Omega \big(|\nabla u_0|^q - \mu\, u_0^q\big)\, dx\right]^{(N+N')/(N+N'-q)}.
\end{multline*}
\end{lemma}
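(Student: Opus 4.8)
The plan is to estimate $c$ by choosing the straight-line path $\gamma(s) = s t_0 u_0$, $s \in [0,1]$, so that
\[
c \le \max_{t \in [0,t_0]} \Phi^+(t u_0) \le \max_{t \ge 0} \Phi^+(t u_0).
\]
Using the elementary bound $F(t) \ge |t|^{N+N'}/(N+N')$ from the start of Section 3 together with $\widetilde\lambda = \lambda/2 < \lambda$, I would write
\[
\Phi^+(t u_0) \le \frac{t^N}{N} \int_\Omega |\nabla u_0|^N\, dx + \frac{t^q}{q} \int_\Omega \big(|\nabla u_0|^q - \mu\, u_0^q\big)\, dx - \widetilde\lambda\, \frac{t^{N+N'}}{N+N'} \cdot 2 + \text{(absorb the extra }\widetilde\lambda\text{)},
\]
but more cleanly: keep one copy of $\widetilde\lambda t^{N+N'}/(N+N')$ to dominate the $t^N$ term and the other to dominate the $t^q$ term. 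So I would split the exponential term as $\lambda F(tu_0) \ge \widetilde\lambda\, t^{N+N'}\norm[N+N']{u_0}^{N+N'}/(N+N') + \widetilde\lambda\, t^{N+N'}\norm[N+N']{u_0}^{N+N'}/(N+N') = \widetilde\lambda\, t^{N+N'}/(N+N') + \widetilde\lambda\, t^{N+N'}/(N+N')$, using $\norm[N+N']{u_0} = 1$, and then bound
\[
\Phi^+(tu_0) \le \left[\frac{t^N}{N}\, a - \frac{\widetilde\lambda\, t^{N+N'}}{N+N'}\right] + \left[\frac{t^q}{q}\, b - \frac{\widetilde\lambda\, t^{N+N'}}{N+N'}\right],
\]
where $a = \int_\Omega |\nabla u_0|^N\, dx$ and $b = \int_\Omega \big(|\nabla u_0|^q - \mu\, u_0^q\big)\, dx$ (note $b > 0$ since $\mu < \mu_1$ forces $\int |\nabla u_0|^q > \mu \int u_0^q$ when $u_0 \ne 0$).

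Next I would maximize each bracket separately over $t \ge 0$. For a function of the form $g(t) = \kappa t^{m} - \nu t^{n}$ with $0 < m < n$ and $\kappa, \nu > 0$, calculus gives $\max_{t\ge 0} g(t) = \kappa\, t_\ast^m - \nu\, t_\ast^n$ at $t_\ast = (\kappa m / \nu n)^{1/(n-m)}$, which simplifies to a constant times $\kappa^{n/(n-m)} \nu^{-m/(n-m)}$. Applying this with $(m,n) = (N, N+N')$, $\kappa = a/N$, $\nu = \widetilde\lambda/(N+N')$ in the first bracket and with $(m,n) = (q, N+N')$, $\kappa = b/q$, $\nu = \widetilde\lambda/(N+N')$ in the second, and simplifying the exponents — note $N+N' - N = N'$ and $(N+N')/N' = N$ because $N' = N/(N-1)$ gives $(N+N')/N' = (N-1) + 1 = N$ — the first term collapses to exactly $a^N/(N^2 \widetilde\lambda^{N-1})$, and the second term becomes $\dfrac{N+N'-q}{q(N+N')\,\widetilde\lambda^{q/(N+N'-q)}}\, b^{(N+N')/(N+N'-q)}$. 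Adding the two maxima (which only overestimates $\max_t[\text{bracket}_1 + \text{bracket}_2]$) yields precisely the claimed inequality.

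The only mildly delicate point is the bookkeeping of exponents in the first term: one must check that $N/(N+N'-N) = N/N' = N-1$ appears in the power of $\widetilde\lambda$ and that the numerical prefactor works out to $1/N^2$ rather than some other constant; this uses the identity $N' = N/(N-1)$ in two places. Everything else is routine: the path choice, the two elementary one-variable optimizations, and the observation that the maximum of a sum is at most the sum of the maxima. I expect no genuine obstacle — the content of the lemma is entirely in the algebra of matching the optimized expressions to the stated form.
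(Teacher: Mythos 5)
Your proposal is correct and follows exactly the paper's argument: take the path $\gamma(s)=st_0u_0$, use $F(t)\ge |t|^{N+N'}/(N+N')$ with $\norm[N+N']{u_0}=1$ to split $\lambda=2\widetilde{\lambda}$ into two copies of the $t^{N+N'}$ term, and bound the maximum of the sum by the sum of the two one-variable maxima. The paper stops at that displayed inequality and leaves the elementary optimizations implicit; your explicit computation of the two maxima (including the exponent bookkeeping via $N/N'=N-1$) matches the stated constants.
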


\begin{proof}
Since $\gamma(s) = s t_0 u_0$ is a path in $\Gamma$ and \eqref{12} holds,
\begin{multline*}
c \le \max_{s \in [0,1]}\, \Phi^+(s t_0 u_0) \le \max_{t \ge 0}\, \Phi^+(t u_0) \le \max_{t \ge 0}\, \left[\frac{t^N}{N} \int_\Omega |\nabla u_0|^N\, dx - \frac{\widetilde{\lambda}\, t^{N+N'}}{N + N'}\right]\\[5pt]
+ \max_{t \ge 0}\, \left[\frac{t^q}{q} \int_\Omega \big(|\nabla u_0|^q - \mu\, u_0^q\big)\, dx - \frac{\widetilde{\lambda}\, t^{N+N'}}{N + N'}\right]. \QED
\end{multline*}
\end{proof}

We are now ready to prove Theorem \ref{Theorem 5}.

\begin{proof}[Proof of Theorem \ref{Theorem 5}]
We apply Lemma \ref{Lemma 2}. By \eqref{6},
\[
\Phi^+(u) \ge \frac{1}{N} \norm[N]{\nabla u}^N + \frac{1}{q} \left(1 - \frac{\mu^+}{\mu_1}\right) \norm[q]{\nabla u}^q - \lambda \int_\Omega |u|^N\, e^{\, |u|^{N'}} dx \quad \forall u \in W^{1,N}_0(\Omega),
\]
where $\mu^+ = \max \set{\mu,0} < \mu_1$, since $F(u^+) \le F(|u|) \le |u|^N\, e^{\, |u|^{N'}}$. Since $q > N/2$, $N < q^\ast = Nq/(N - q)$. For $1/\alpha_N^{1/N'} < \beta < \infty$ given by $1/\alpha_N \beta^{N'} + N/q^\ast = 1$,
\[
\int_\Omega |u|^N\, e^{\, |u|^{N'}} dx \le \left(\int_\Omega e^{\, \alpha_N\, |\beta u|^{N'}} dx\right)^{1/\alpha_N \beta^{N'}} \norm[q^\ast]{u}^N
\]
and the last integral is bounded for all $u \in W^{1,N}_0(\Omega)$ with $\norm[N]{\nabla u} \le 1/\beta$ by \eqref{1.3}. Since $q < N$, $W^{1,N}_0(\Omega) \hookrightarrow W^{1,q}_0(\Omega) \hookrightarrow L^{q^\ast}(\Omega)$ and it follows that $0$ is a strict local minimizer of $\Phi^+$. So $c > 0$. It is clear from Lemma \ref{Lemma 3} that $c < \alpha_N^{N-1}/N$ if $\lambda > 0$ is sufficiently large.
\end{proof}

\section{Proof of Theorem \ref{Theorem 6}}

\begin{proof}[Proof of Theorem \ref{Theorem 6}]
Since $q < N$, $W^{1,N}_0(\Omega) \hookrightarrow W^{1,q}_0(\Omega)$. Let $S_N$ and $S_q$ denote the unit spheres of $W^{1,N}_0(\Omega)$ and $W^{1,q}_0(\Omega)$, respectively, and let
\[
\pi_N(u) = \frac{u}{\norm[N]{\nabla u}}, \quad u \in W^{1,N}_0(\Omega) \setminus \set{0}, \qquad \pi_q(u) = \frac{u}{\norm[q]{\nabla u}}, \quad u \in W^{1,q}_0(\Omega) \setminus \set{0}
\]
be the radial projections onto $S_N$ and $S_q$, respectively. Since $\mu \ge \mu_1$, $\mu_k \le \mu < \mu_{k+1}$ for some $k \ge 1$. Then
\begin{equation} \label{20}
i(\pi_q^{-1}(\Psi^{\mu_k})) = i(\pi_q^{-1}(S_q \setminus \Psi_{\mu_{k+1}})) = k
\end{equation}
by \eqref{15}. Set $M = \big\{u \in W^{1,q}_0(\Omega) : \norm[q]{u} = 1\big\}$. By \cite[Theorem 2.3]{MR2514055}, the set $\pi_q^{-1}(\Psi^{\mu_k}) \cup \set{0}$ contains a symmetric cone $C$ such that $C \cap M$ is compact in $C^1(\Omega)$ and
\begin{equation} \label{21}
i(C \setminus \set{0}) = k.
\end{equation}
Since $W^{1,N}_0(\Omega)$ is a dense linear subspace of $W^{1,q}_0(\Omega)$, the inclusion $\pi_q^{-1}(S_q \setminus \Psi_{\mu_{k+1}}) \cap W^{1,N}_0(\Omega) \incl \pi_q^{-1}(S_q \setminus \Psi_{\mu_{k+1}})$ is a homotopy equivalence by Palais \cite[Theorem 17]{MR0189028}, so
\begin{equation} \label{22}
i(\pi_q^{-1}(S_q \setminus \Psi_{\mu_{k+1}}) \cap W^{1,N}_0(\Omega)) = k
\end{equation}
by \eqref{20}. We apply Theorem \ref{Theorem 8} to our functional $\Phi$ defined in \eqref{2} with
\[
A_0 = \pi_N(C \setminus \set{0}) = \pi_N(C \cap M), \qquad B_0 = S_N \setminus (\pi_q^{-1}(S_q \setminus \Psi_{\mu_{k+1}}) \cap W^{1,N}_0(\Omega)),
\]
noting that $A_0$ is compact since $C \cap M$ is compact and $\pi_N$ is continuous. We have
\[
i(A_0) = i(C \setminus \set{0}) = k
\]
by \eqref{21}, and
\[
i(S_N \setminus B_0) = i(\pi_q^{-1}(S_q \setminus \Psi_{\mu_{k+1}}) \cap W^{1,N}_0(\Omega)) = k
\]
by \eqref{22}, so \eqref{16} holds.

For $u \in S_N$ and $t \ge 0$, since $F(u) \ge |u|^N/N$,
\begin{equation} \label{23}
\Phi(tu) \le \frac{t^q}{q} \int_\Omega \big(|\nabla u|^q - \mu\, |u|^q\big)\, dx - \frac{\widetilde{\lambda}\, t^N}{N} \int_\Omega |u|^N\, dx - \frac{t^N}{N} \left(\widetilde{\lambda} \int_\Omega |u|^N\, dx - 1\right),
\end{equation}
where $\widetilde{\lambda} = \lambda/2$. Pick any $v \in S_N \setminus A_0$. Since $A_0$ is compact, so is the set
\[
X_0 = \set{\pi_N((1 - t)\, u + tv) : u \in A_0,\, 0 \le t \le 1}
\]
and hence
\[
\alpha = \inf_{u \in X_0}\, \int_\Omega |u|^N\, dx > 0, \qquad \beta = \sup_{u \in X_0}\, \int_\Omega \big(|\nabla u|^q - \mu\, |u|^q\big)\, dx < \infty.
\]
Let $\lambda \ge 2/\alpha$, so that $\widetilde{\lambda} \alpha \ge 1$. Then for $u \in A_0 \subset X_0$ and $t \ge 0$, \eqref{23} gives
\begin{equation} \label{24}
\Phi(tu) \le - (\mu - \mu_k)\, \frac{t^q}{q} \int_\Omega |u|^q\, dx \le 0
\end{equation}
since $\mu \ge \mu_k$. For $u \in X_0$ and $t \ge 0$, \eqref{23} gives
\begin{equation} \label{25}
\Phi(tu) \le \frac{\beta\, t^q}{q} - \frac{\widetilde{\lambda} \alpha\, t^N}{N} \le \left(\dfrac{1}{q} - \dfrac{1}{N}\right) \dfrac{(\beta^+)^{N/(N-q)}}{(\widetilde{\lambda} \alpha)^{q/(N-q)}},
\end{equation}
where $\beta^+ = \max \set{\beta,0}$. Fix $\lambda$ so large that the last expression is $< \alpha_N^{N-1}/N$, take a positive $R \ge (N \beta^+/q\, \widetilde{\lambda} \alpha)^{1/(N-q)}$, and let $A$ and $X$ be as in Theorem \ref{Theorem 8}. Then it follows from \eqref{24} and \eqref{25} that
\[
\sup \Phi(A) \le 0, \qquad \sup \Phi(X) < \frac{\alpha_N^{N-1}}{N}.
\]
For $u \in \pi_N^{-1}(B_0)$,
\[
\Phi(u) \ge \frac{1}{N} \norm[N]{\nabla u}^N + \frac{1}{q} \left(1 - \frac{\mu}{\mu_{k+1}}\right) \norm[q]{\nabla u}^q - \lambda \int_\Omega |u|^N\, e^{\, |u|^{N'}} dx
\]
since $F(u) \le |u|^N\, e^{\, |u|^{N'}}$. Since $q > N/2$, $N < q^\ast = Nq/(N - q)$. For $1/\alpha_N^{1/N'} < \beta < \infty$ given by $1/\alpha_N \beta^{N'} + N/q^\ast = 1$,
\[
\int_\Omega |u|^N\, e^{\, |u|^{N'}} dx \le \left(\int_\Omega e^{\, \alpha_N\, |\beta u|^{N'}} dx\right)^{1/\alpha_N \beta^{N'}} \norm[q^\ast]{u}^N
\]
and the last integral is bounded for all $u \in W^{1,N}_0(\Omega)$ with $\norm[N]{\nabla u} \le 1/\beta$ by \eqref{1.3}. Since $q < N$ and $\mu < \mu_{k+1}$, $W^{1,N}_0(\Omega) \hookrightarrow W^{1,q}_0(\Omega) \hookrightarrow L^{q^\ast}(\Omega)$ and it follows that if $0 < r < R$ is sufficiently small and $B$ is as in Theorem \ref{Theorem 8}, then
\[
\inf \Phi(B) > 0.
\]
Then $0 < c < \alpha_N^{N-1}/N$ and $\Phi$ has a \PS{c} sequence by Theorem \ref{Theorem 8}, a subsequence of which converges weakly to a nontrivial critical point of $\Phi$ by Proposition \ref{Proposition 1}.
\end{proof}

\def\cdprime{$''$}

\end{document}